\numberwithin{equation}{section}
\theoremstyle{plain}
\newtheorem{Th}{Theorem}[section]
\newtheorem{Lemma}[Th]{Lemma}
 \theoremstyle{definition}
\newtheorem{Rem}[Th]{Remark}
\newtheorem{?}[Th]{Problem}
\begin{document}

\title[Sum of the smallest and largest eigenvalues of a triangle-free graph]{Note on the sum of the smallest and largest eigenvalues of a triangle-free graph}

\author[P. Csikv\'ari]{P\'{e}ter Csikv\'{a}ri}

\address{Alfr\'ed R\'enyi Institute of Mathematics \&  E\"{o}tv\"{o}s Lor\'{a}nd University \\ Department of Computer 
Science} 

\email{peter.csikvari@gmail.com}

\thanks{The  author  is  supported by the Counting in Sparse Graphs Lend\"ulet Research Group.}

 \subjclass[2010]{Primary: 05C35. Secondary: }

 \keywords{eigenvalues, triangle-free graphs}

\begin{abstract} Let $G$ be a triangle-free graph on $n$ vertices with adjacency matrix eigenvalues $\mu_1(G)\geq \mu_2(G)\geq \dots \geq \mu_n(G)$. In this paper we study the quantity
$$\mu_1(G)+\mu_n(G).$$
We prove that for any triangle-free graph $G$ we have
$$\mu_1(G)+\mu_n(G)\leq (3-2\sqrt{2})n.$$
This was proved for regular graphs by Brandt, we show that the condition on regularity is not necessary. We also prove that among triangle-free strongly regular graphs the Higman-Sims graph achieves the maximum of 
$$\frac{\mu_1(G)+\mu_n(G)}{n}.$$

\end{abstract}

\maketitle

\section{Introduction} In this paper every graph is simple. Motivated by the papers \cite{Brandt} and \cite{LNO} we study the following problem.
Let $\mathcal{G}_3$ be  the family of  triangle-free graphs, and for a graph $G$ on $v(G)=n$ vertices let $\mu_1(G)\geq \mu_2(G)\geq \dots \geq \mu_n(G)$ be the eigenvalues of the adjacency matrix of $G$. The problem is to determine
$$c_3=\sup_{G\in \mathcal{G}_3}\frac{\mu_1(G)+\mu_n(G)}{v(G)}.$$

Brandt \cite{Brandt} proved that for regular triangle-free graphs we have
$$\mu_1(G)+\mu_n(G)\leq (3-2\sqrt{2})n.$$
Very recently Balogh, Clemen, Lidick\'y, Norin and Volec proved that for regular 
triangle-free graphs we have $\mu_1(G)+\mu_n(G)\leq \frac{15}{94}n<0.1596n$ and they mention in their paper that a similar but larger computation also gives the result $0.15467$ instead of $0.1596$. In fact, they study the smallest eigenvalue $q_n(G)$ of the so-called signless laplacian matrix $\overline{L}=D+A$, where $D$ is is the diagonal matrix consisting of the degrees of the vertices and $A$ is the adjacency matrix of the graph $G$. The quantity $q_n(G)$ coincides with $\mu_1(G)+\mu_n(G)$ if $G$ is regular.  
Balogh, Clemen, Lidick\'y, Norin and Volec mentions that in case of regular graphs they can further improve their result to prove $q_n(G)\leq 0.15442n$.
Our first result is to prove that in Brandt's theorem one can drop the condition of regularity.

\begin{Th} \label{Th1} Let $G$ be a triangle-free graph on $n$ vertices, and let $\mu_1(G)\geq \mu_2(G)\geq \dots \geq \mu_n(G)$ be the eigenvalues of its adjacency matrix. Then
$$\mu_1(G)+\mu_n(G)\leq (3-2\sqrt{2})n.$$
\end{Th}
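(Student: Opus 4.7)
We may assume $G$ is connected and non-bipartite; bipartite graphs give $\mu_1(G)+\mu_n(G)=0$, and disconnected graphs can be handled by restricting to the connected component that maximizes the ratio $(\mu_1+\mu_n)/v(G)$. Let $v>0$ be the unit Perron eigenvector of $A(G)$ and let $u$ be a unit eigenvector for $\mu_n$ with $v\perp u$. The identity
\[
\mu_1(G)+\mu_n(G)\;=\;(v+u)^{T}A(G)(v+u),\qquad \|v+u\|_2^2=2,
\]
which follows from $v^{T}Av=\mu_1$, $u^{T}Au=\mu_n$ and $v\perp u$, recasts the target quantity as the quadratic form of $A(G)$ evaluated on the single vector $f:=v+u$.

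Decomposing $f=f^{+}-f^{-}$ into its positive and negative parts, with disjoint supports $S,T\subseteq V(G)$, one obtains
\[
\mu_1+\mu_n\;=\;2\!\!\!\sum_{ij\in E(G[S])}\!\!\! f_i^{+} f_j^{+} \;+\; 2\!\!\!\sum_{ij\in E(G[T])}\!\!\! f_i^{-} f_j^{-} \;-\; 2\!\!\!\!\!\sum_{\substack{i\in S,\,j\in T\\ ij\in E(G)}}\!\!\!\!\! f_i^{+} f_j^{-}.
\]
Since $G[S]$ and $G[T]$ are triangle-free, Nosal's inequality $\mu_1(H)\leq\sqrt{e(H)}$, combined with Mantel's bound $e(H)\leq|V(H)|^{2}/4$, gives
\[
2\sum_{ij\in E(G[S])} f_i^{+} f_j^{+} \;\leq\; \sqrt{e(G[S])}\,\|f^{+}\|_2^{2} \;\leq\; \tfrac{|S|}{2}\,\|f^{+}\|_2^{2},
\]
and analogously for $G[T]$. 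The cross term is non-negative and appears with a minus sign, so it only reduces $\mu_1+\mu_n$ further.

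The main obstacle is extracting the sharp constant $3-2\sqrt{2}=(\sqrt{2}-1)^{2}$ from the resulting constrained optimization: the Nosal/Mantel bounds taken alone only yield the weaker inequality $\mu_1+\mu_n\leq n$. To reach the Brandt constant, one must exploit additional constraints linking $(|S|,\|f^{+}\|_2)$ with $(|T|,\|f^{-}\|_2)$, coming from the Perron-positivity of $v$ (which forces $u_i>-v_i$ on $S$ and $u_i<-v_i$ on $T$), from $\|f^{+}\|_2^{2}+\|f^{-}\|_2^{2}=2$, and from $v\perp u$; or a quantitative lower bound on the cross term using the triangle-free relation $d(i)+d(j)\leq n$ for each edge. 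A structural alternative is to reduce to the regular case by constructing a triangle-free regular graph $H$ (for instance a blow-up of $G$) with $(\mu_1(H)+\mu_n(H))/v(H)\geq(\mu_1(G)+\mu_n(G))/n$ and invoking Brandt's theorem on $H$; however, a positive regular blow-up need not exist for all sparse non-regular triangle-free $G$ (e.g.\ $P_4$), so this route would require a more elaborate construction.
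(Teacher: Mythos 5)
Your submission is a plan rather than a proof, and the gap you flag yourself is in fact the entire content of the theorem. The reductions to connected non-bipartite $G$, the identity $\mu_1+\mu_n=(v+u)^{T}A(v+u)$ with $\|v+u\|^2=2$, and the Nosal/Mantel estimates on $G[S]$ and $G[T]$ are all correct, but as you observe they only combine to give $\mu_1+\mu_n\leq \tfrac{|S|}{2}\|f^+\|^2+\tfrac{|T|}{2}\|f^-\|^2\leq n$, which is weaker than the trivial bound $\mu_1+\mu_n\leq\mu_1\leq n-1$. The list of "additional constraints" (Perron positivity, $v\perp u$, a lower bound on the cross term) is never developed into an argument, and the blow-up reduction to the regular case is an alternative you yourself note does not work in general. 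So the constant $3-2\sqrt{2}$ is not established.

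For comparison, the paper's route is short and uses cubic rather than quadratic spectral information. Since $G$ is triangle-free, $\operatorname{tr}A^3=\sum_i\mu_i^3=0$, hence with $\mu_s,\dots,\mu_n$ the non-positive eigenvalues,
\[
\mu_1^3\;\leq\;\sum_{i=s}^{n}(-\mu_i)^3\;\leq\;(-\mu_n)\sum_{i=s}^{n}\mu_i^2\;\leq\;(-\mu_n)\bigl(2e(G)-\mu_1^2\bigr)\;\leq\;(-\mu_n)\bigl(n\mu_1-\mu_1^2\bigr),
\]
using $2e(G)\leq n\mu_1$. This yields $-\mu_n\geq \mu_1^2/(n-\mu_1)$, so $\mu_1+\mu_n\leq \mu_1-\mu_1^2/(n-\mu_1)$, and maximizing $(\alpha-2\alpha^2)/(1-\alpha)$ over $\alpha=\mu_1/n\in[0,1]$ gives the value $3-2\sqrt{2}$ at $\alpha=1-1/\sqrt{2}$. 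If you wish to salvage your approach, the missing quantitative input is precisely this use of the vanishing of $\operatorname{tr}A^3$; the purely quadratic data you invoke (Rayleigh quotients, Nosal, Mantel) loses the sharp constant already at the first inequality.
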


The proof of Theorem~\ref{Th1} heavily relies on the following lemma which might be of independent interest.

\begin{Lemma} \label{Lem} Let $G$ be a triangle-free graph on $n$ vertices, and let $\mu_1(G)\geq \mu_2(G)\geq \dots \geq \mu_n(G)$ be the eigenvalues of its adjacency matrix. Then
$$\mu_1(G)\leq \frac{-n\mu_n(G)}{\mu_1(G)-\mu_n(G)}.$$
\end{Lemma}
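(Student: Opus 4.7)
The plan is to combine three ingredients: the triangle-free identity $\sum_i \mu_i^3 = \operatorname{tr}(A^3) = 0$ (since $\operatorname{tr}(A^3)$ equals $6$ times the number of triangles), the second-moment identity $\sum_i \mu_i^2 = \operatorname{tr}(A^2) = 2m$ with $m = e(G)$, and the Rayleigh-quotient lower bound $\mu_1 \geq 2m/n$ coming from evaluating the quadratic form on the all-ones vector. The idea is to isolate $\mu_1, \mu_n$ in the cubic identity, sandwich the middle eigenvalues using $\mu_n \leq \mu_i$, and then convert the resulting polynomial inequality into the desired one via $2m \leq n\mu_1$.

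Concretely, I would first rewrite $\mu_1^3 + \mu_n^3 = -\sum_{i=2}^{n-1}\mu_i^3$. For each $i \in \{2,\dots,n-1\}$ the inequalities $\mu_i \geq \mu_n$ and $\mu_i^2 \geq 0$ give $\mu_i^3 = \mu_i \cdot \mu_i^2 \geq \mu_n \mu_i^2$; summing and using $\sum_{i=2}^{n-1}\mu_i^2 = 2m - \mu_1^2 - \mu_n^2$ yields
$$\sum_{i=2}^{n-1}\mu_i^3 \;\geq\; \mu_n\bigl(2m - \mu_1^2 - \mu_n^2\bigr).$$
Plugging this back, the two $\mu_n^3$ terms cancel and one is left with the compact intermediate bound
$$\mu_1^2(\mu_1 - \mu_n) \;\leq\; -2m\mu_n.$$
Then, using $-\mu_n \geq 0$ together with $2m \leq n\mu_1$ replaces $-2m\mu_n$ by $-n\mu_1\mu_n$, producing $\mu_1^2(\mu_1 - \mu_n) \leq -n\mu_1\mu_n$. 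Assuming $G$ has at least one edge (otherwise the statement is vacuous) so that $\mu_1 > 0$, dividing by $\mu_1$ gives $\mu_1(\mu_1 - \mu_n) \leq -n\mu_n$, which is equivalent to the stated inequality.

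The nontrivial point of the plan is the two-step structure: the cubic trace identity by itself only produces a bound involving $2m$, and the passage to a bound in $n$ requires the elementary estimate $2m \leq n\mu_1$. This conversion does lose information in general, so the worry is that the proof might not be tight; a sanity check on $K_{n/2,n/2}$ is reassuring, however, since there every middle eigenvalue is $0$ (making the cubic estimate sharp) and $2m = n\mu_1$ (making the Rayleigh step sharp), so each link in the chain is saturated on the expected extremal graph and nothing is wasted in the regime that matters.
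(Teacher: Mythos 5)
Your proof is correct and follows essentially the same route as the paper's: both start from $\sum_i \mu_i^3 = 0$, bound the middle eigenvalues' cubes by $\mu_n$ times their squares together with $\sum_i \mu_i^2 = 2m$, and finish with $2m \le n\mu_1$. The only (cosmetic) difference is that the paper first discards the positive middle eigenvalues and then bounds the non-positive ones, whereas you apply the single inequality $\mu_i^3 \ge \mu_n \mu_i^2$ uniformly; the resulting intermediate bound $\mu_1^2(\mu_1-\mu_n) \le -2m\mu_n$ is the same.
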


Brandt \cite{Brandt} also realized that  for the so-called Higman-Sims graph $H$ we have
$$\frac{\mu_1(H)+\mu_n(H)}{v(H)}=\frac{22+(-8)}{100}=0.14$$
which gives a rather good lower bound for $c_3$. Higman-Sims graph is the unique strongly regular graph with parameters $(100,22,0,6)$. Recall that a graph $G$ is a strongly regular graph with paremeters $(n,k,a,b)$ if it has $n$ vertices, $k$-regular, any two adjacent vertices have exactly $a$ common neighbors, and any two non-adjacent vertices have exactly $b$ common neighbors.  In this paper we show that among the strongly regular graphs, it is indeed the Higman-Sims graph which maximizes the quantity
$$\frac{\mu_1(G)+\mu_n(G)}{v(G)}.$$

Note that only finitely many triangle-free strongly regular graphs are known currently, but we do not rely on this fact.

\begin{Th} \label{Th2} Let $G$ be a triangle-free strongly regular graph on $n$ vertices. Then
$$\mu_1(G)+\mu_n(G)\leq 0.14n$$
with equality if and only if $G$ is the Higman-Sims graph.
\end{Th}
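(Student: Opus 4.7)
The plan is to parameterise triangle-free strongly regular graphs by $r = \mu_2(G)$ and the parameter $b$, use the second Krein inequality to obtain a clean constraint on $b$ in terms of $r$, and then verify the main inequality as a short case analysis of a quadratic in $b$.

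Let $G$ have parameters $(n,k,0,b)$ with non-principal eigenvalues $r > \tau$. From the identity $A^2 + bA - (k-b)I = bJ$ these are the roots of $x^2 + bx - (k-b) = 0$, so
$$k = r^2 + br + b, \qquad \tau = -r - b, \qquad \mu_1 + \mu_n = k + \tau = r(r + b - 1),$$
and the walk count $k(k-1) = b(n - k - 1)$ gives $n = 1 + k + k(k-1)/b$. If $r$ is irrational then $G$ is a conference graph with both multiplicities equal to $(n-1)/2$, forcing $2k = (n-1)b$; combined with the walk count this leaves only $(k, b) = (2, 1)$, i.e.\ $G = C_5$, for which $(\mu_1 + \mu_n)/n = (3 - \sqrt 5)/10 < 0.14$. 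Thus I may assume $r, b$ are positive integers.

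The key reduction is the second Krein inequality
$$(\tau + 1)(k + \tau + 2 r \tau) \le (k + \tau)(r + 1)^2.$$
Substituting the formulas above, the left side simplifies to $r(r + b - 1)(r + b + 1)$ and the right to $r(r + b - 1)(r + 1)^2$; after dividing by $r(r + b - 1) > 0$ one obtains the clean bound
$$b \le r(r + 1),$$
saturated at exactly the Higman-Sims-type parameter triples $(r, r(r+1))$.

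It then remains to show $7n \ge 50(\mu_1 + \mu_n)$ for positive integers $r, b$ with $1 \le b \le r(r+1)$, with equality exactly at $(r, b) = (2, 6)$. Clearing denominators, this becomes
$$(7 r^2 - 29 r + 14)\, b^2 + r(14 r^2 - 29 r + 43)\, b + 7 r^2 (r^2 - 1) \ge 0,$$
a quadratic in $b$. For $r \ge 4$ all three coefficients are non-negative (the roots of $7r^2 - 29r + 14$ lie below $4$, and $14 r^2 - 29 r + 43$ has negative discriminant), so the inequality is strict. For $r = 1$ it collapses to $4 b (7 - 2 b) \ge 0$, strict on the allowed range $b \in \{1, 2\}$. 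For $r = 3$ the positive root of the resulting quadratic in $b$ lies near $26.5$, comfortably beyond the Krein bound $b \le 12$. In the critical case $r = 2$ the polynomial factors as $(6 - b)(16 b + 14)$, which is non-negative for $b \le 6$ and vanishes precisely at $b = 6$; this forces the Higman-Sims parameter triple $(100, 22, 0, 6)$, and the equality characterisation then follows from the classical theorem of Gewirtz that the Higman-Sims graph is the unique strongly regular graph with these parameters. The main technical hurdle is recognising that the right Krein inequality is exactly the one that saturates at Higman-Sims; once this clean constraint on $b$ is in hand, the remainder is a brief finite check.
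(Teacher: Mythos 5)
Your proof is correct, and it takes a genuinely different route from the paper. The paper argues by contradiction: from the Hoffman--Delsarte bound (via $k\leq\alpha(G)$ for triangle-free graphs) it deduces $x=-\mu_n/k>1/5$, from the multiplicity bound $m_n\geq\alpha(G)\geq k$ it deduces $n>\mu_n^2$, combines these to force $n<816.34$, and then finishes by inspecting the complete list of feasible triangle-free strongly regular parameter sets up to $816$ on Brouwer's tables. Your argument replaces the finite table by the second Krein condition: after parametrising by $(r,b)$ via $k=r^2+rb+b$, $\tau=-r-b$, $n=1+k+k(k-1)/b$, the Krein inequality collapses to the clean bound $b\leq r(r+1)$, and the target inequality $7n\geq 50(k+\tau)$ becomes the quadratic $(7r^2-29r+14)b^2+r(14r^2-29r+43)b+7r^2(r^2-1)\geq 0$, which I have checked is the correct expansion; your case analysis in $r$ is also correct (in particular the factorisation $-16b^2+82b+84=(6-b)(16b+14)$ at $r=2$, which pins down equality to $(100,22,0,6)$ and hence, by Gewirtz's uniqueness theorem, to the Higman--Sims graph). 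What your approach buys is a self-contained, table-free proof that also transparently explains \emph{why} Higman--Sims is extremal: it is the $r=2$ graph saturating the Krein bound. What it costs is reliance on the Krein conditions and on the uniqueness of the $(100,22,0,6)$ graph, whereas the paper's computation only needs the elementary ratio bound (though it too ultimately leans on external classification data, including nonexistence results for $(28,9,0,4)$ and $(64,21,0,10)$). One small point to tidy up: after excluding the conference-graph case you assert that $r$ is a \emph{positive} integer, but $r=0$ occurs for complete bipartite graphs $K_{m,m}$ (parameters $(2m,m,0,m)$), and there your division by $r(r+b-1)$ is illegitimate; this case is harmless since then $\mu_1+\mu_n=0$, but it should be dispatched explicitly (the paper does so with its parenthetical remark that one may assume $\mu_2>0$).
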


\section{Proof of Theorem~\ref{Th1}}

We begin with proving Lemma~\ref{Lem}. Before we actually start it let us mention that for regular graphs this lemma is a simple consequence of the Hoffman-Delsarte bound for independent sets. Indeed, let $\alpha(G)$ denote the size of the largest independent set of a $d$--regular graph. Then by the Hoffman-Delsarte bound we have
$$\alpha(G)\leq \frac{-n\mu_n(G)}{d-\mu_n(G)}.$$
Since $G$ is triangle-free, the neighbors of a vertex determine an independent set, whence $d\leq \alpha(G)$. Since $d=\mu_1(G)$ we get that
$$\mu_1(G)=d\leq \alpha(G)\leq \frac{-n\mu_n(G)}{d-\mu_n(G)}\leq \frac{-n\mu_n(G)}{\mu_1(G)-\mu_n(G)}.$$
Based on this inequality Brandt proved that 
$$\mu_1(G)+\mu_n(G)\leq (3-2\sqrt{2})n.$$
So after proving Lemma~\ref{Lem} we practically copy the proof of Brandt.

\begin{proof}[Proof of Lemma~\ref{Lem}.]
Let $\mu_s,\dots ,\mu_n$ be the set of non-positive eigenvalues. Then
$$0=6\cdot \mathrm{number\, \,  of\, \, triangles}=\sum_{i=1}^n\mu_i^3\geq \mu_1^3+\sum_{i=s}^n\mu_i^3.$$
Hence
$$\sum_{i=s}^n(-\mu_i)^3\geq \mu_1^3.$$
On the other hand, we have
$$\sum_{i=s}^n(-\mu_i)^3\leq (-\mu_n)\sum_{i=s}^n(-\mu_i)^2\leq (-\mu_n)(2e(G)-\mu_1^2)\leq (-\mu_n)(n\mu_1-\mu_1^2).$$
Hence $\mu_1^3\leq (-\mu_n)(n\mu_1-\mu_1^2),$ thus
$\mu_1^2\leq (-\mu_n)(n-\mu_1),$
or in other words,
$$\mu_1\leq \frac{-n\mu_n}{\mu_1-\mu_n}.$$
\end{proof}

\begin{proof}[Proof of Theorem~\ref{Th1}] As we mentioned earlier this proof practically follows the argument of \cite{Brandt}.

We only need to solve the constrained maximization problem:
$$\max\left\{ \frac{\mu_1+\mu_n}{n}\ |\ \mu_1\leq \frac{-n\mu_n}{\mu_1-\mu_n}\right\}.$$
Let $a=\mu_1$, $b=-\mu_n$ then we have
$a\leq \frac{nb}{a-b}$
which is equivalent to 
$\frac{a^2}{n-a}\leq b.$
Hence
$$\frac{a-b}{n}\leq  \frac{1}{n}\left(a-\frac{a^2}{n-a}\right)=\frac{an-2a^2}{n(n-a)}.$$
So with the notation $\alpha=a/n$ we need to maximize
$f(\alpha):=\frac{\alpha-2\alpha^2}{1-\alpha}.$
Its derivative is 
$\frac{1-4\alpha+2\alpha^2}{(1-\alpha)^2}$
which is $0$ at $\alpha=1\pm 1/\sqrt{2}$. Note that $\mu_1\leq \Delta\leq n-1$, where $\Delta$ is the largest degree, so $0\leq \alpha\leq 1$. So we only need to consider $\alpha=1-1/\sqrt{2}$ and the extreme points of the interval, $\alpha=0$ and $1$, to see that $f(\alpha)$ is indeed maximal at $1-1/\sqrt{2}$ and $f(\alpha)=3-2\sqrt{2}$.

Hence $\mu_1+\mu_n\leq (3-2\sqrt{2})n$.

\end{proof}

\section{Proof of Theorem~\ref{Th2}}

In this secation we prove Theorem~\ref{Th2}.

\begin{proof}[Proof of Theorem~\ref{Th2}]
Suppose for contradiction that $G$ is a strongly regular graph with eigenvalues $(k,\mu_2^{(m_2)},\mu_n^{(m_n)})$ such that
$\frac{k+\mu_n}{n}>0.14.$
Let $-\mu_n=r$ and $r/k=x$. Again we use that
$k\leq \alpha(G)\leq \frac{-n\mu_n}{k-\mu_n}.$
Hence 
$\frac{r}{k}\geq \frac{k+r}{n}.$
Then
$$0.14<\frac{k-r}{n}=\frac{k-r}{k+r}\cdot \frac{k+r}{n}\leq \frac{k-r}{k+r}\cdot \frac{r}{k}=\frac{x(1-x)}{1+x}.$$
From which we get that $x>1/5$. Secondly, $m_n\geq \alpha(G)\geq k$ since we can assume that $\mu_2>0$. (Note that $\mu_2>0$ if $G$ is not a blow-up of a complete graph.)
Hence
$kn=2e(G)\geq m_n\mu_n^2\geq kr^2.$
So we have $n>r^2$. So we have two inequalities:
$0.14<\frac{k-r}{n}=\frac{k(1-x)}{n}$
and
$n>r^2.$
Then
$n>r^2=(kx)^2>x^2\frac{0.14^2}{(1-x)^2}n^2.$
Thus
$\frac{1}{0.14^2}\frac{(1-x)^2}{x^2}>n.$
Since $x>1/5$ we have 
$\frac{(1-x)^2}{x^2}<16.$
Hence
$n<\frac{16}{0.14^2}\approx 816.33.$
Now we can finish the proof since we know all possible strongly regular graph parameters up to $816$, see Brouwer's website \cite{Brouwer} and for the triangle-free strongly regular graph parameters the table on the next page. One can check that indeed the Higman-Sims graph achieves the maximum of $(\mu_1(G)+\mu_n(G))/n$.
\end{proof}

\begin{Rem} An interesting thing arises from the table on Andries Brouwer's website. If there were a strongly regular graph $G$ with parameters $(28,9,0,4)$ then for this graph $G$ we would have
$$\frac{\mu_1(G)+\mu_n(G)}{v(G)}=\frac{9+(-5)}{28}=\frac{1}{7}>0.14.$$
It is known that there is no such strongly regular graph just as there is no strongly regular graph with parameters $(64,21,0,10)$. For this graph we would have
$$\frac{\mu_1(G)+\mu_n(G)}{v(G)}=\frac{21+(-11)}{64}=\frac{10}{64}>\frac{1}{7}>0.14.$$

\end{Rem}

\begin{center}
$$\begin{array}{|c|c|c|c|c|c|c|c|c|c|c|} \hline
n & k & a & b & \vartheta_1 & \vartheta_2 & m_1 & m_2 & \frac{k+\vartheta_2}{n} &\textrm{Appr.} & \textrm{Existence} \\[6pt] \hline 
5 & 2 & 0 & 1 & \frac{\sqrt{5}-1}{2} &  \frac{-\sqrt{5}-1}{2}  & 2 & 2 & \frac{3-\sqrt{5}}{10} &0.076 &\textrm{Yes} \\[6pt] \hline
10 & 3 & 0 & 1 & 1 & -2 & 4 & 5 & \frac{1}{10} & 0.1& \textrm{Yes} \\[6pt] \hline
16 & 5 & 0 & 2 & 1 & -3 & 10 & 5 & \frac{2}{16} & 0.125&\textrm{Yes} \\[6pt] \hline
28 & 9 & 0 & 4 & 1 & -5 & 21 & 6 & \frac{4}{28} & 0.142&\textrm{No} \\[6pt] \hline
50 & 7 & 0 & 1 & 2 & -3 & 28 & 21 & \frac{4}{50} & 0.08 &\textrm{Yes} \\[6pt] \hline
56 & 10 & 0 & 2 & 2 & -4 & 35 & 20 & \frac{6}{56} & 0.106&\textrm{Yes} \\[6pt] \hline
64 & 21 & 0 & 10 & 1 & -11 & 56 & 7 & \frac{10}{64} & 0.156&\textrm{No} \\[6pt] \hline
77 & 16 & 0 & 4 & 2 & -6 & 55 & 21 & \frac{10}{77} &0.129 &\textrm{Yes} \\[6pt] \hline
100 & 22 & 0 & 6 & 2 & -8 & 77 & 22 & \frac{14}{100} & 0.14&\textrm{Yes} \\[6pt] \hline
162 & 21 & 0 & 3 & 3 & -6 & 105 & 56 & \frac{15}{162} & 0.092&\textrm{?} \\[6pt] \hline
176 & 25 & 0 & 4 & 3 & -7 & 120 & 55 & \frac{18}{176} &0.102 &\textrm{?} \\[6pt] \hline
210 & 33 & 0 & 6 & 3 & -9 & 154 & 55 & \frac{24}{210} &0.114 &\textrm{?} \\[6pt] \hline
266 & 45 & 0 & 9 & 3 & -12 & 209 & 56 & \frac{33}{266} &0.124 &\textrm{?} \\[6pt] \hline
324 & 57 & 0 & 12 & 3 & -15 & 266 & 57 & \frac{42}{324} &0.129 &\textrm{No} \\[6pt] \hline
352 & 26 & 0 & 2 & 4 & -6 & 208 & 143 & \frac{20}{352} & 0.056&\textrm{?} \\[6pt] \hline
352 & 36 & 0 & 4 & 4 & -8 & 231 & 120 & \frac{28}{352} &0.079 &\textrm{?} \\[6pt] \hline
392 & 46 & 0 & 6 & 4 & -10 & 276 & 115 & \frac{36}{392} & 0.091&\textrm{?} \\[6pt] \hline
552 & 76 & 0 & 12 & 4 & -16 & 437 & 114 & \frac{60}{552} &0.108 &\textrm{?} \\[6pt] \hline
638 & 49 & 0 & 4 & 5 & -9 & 406 & 231 & \frac{40}{638} &0.062 &\textrm{?} \\[6pt] \hline
650 & 55 & 0 & 5 & 5 & -10 & 429 & 220 & \frac{50}{650} & 0.076&\textrm{?} \\[6pt] \hline
667 & 96 & 0 & 16 & 4 & -20 & 551 & 115 & \frac{76}{667} & 0.113&\textrm{?} \\[6pt] \hline
704 & 37 & 0 & 2 & 5 & -7 & 407 & 296 & \frac{30}{704} & 0.042&\textrm{?} \\[6pt]\hline
784 & 116 & 0 & 20 & 4 & -24 & 667 & 116 & \frac{92}{784} &0.117 &\textrm{?} \\[6pt] \hline
800 & 85 & 0 & 10 & 5 & -15 & 595 & 204 & \frac{70}{800} & 0.087&\textrm{?} \\[6pt] \hline
\end{array}
$$

\end{center}


\begin{thebibliography}{99} 

\bibitem{BCLNV} J\'ozsef Balogh, Felix Christian Clemen, Bernard Lidick\'y, Sergey Norin, Jan Volec: \textit{The spectrum of triangle-free graphs}, ArXiv preprint 2204.00093

\bibitem{Brandt} S. Brandt:  \textit{The local density of triangle-free graphs},
Discrete Math. \textbf{183} (1998), 17--25.

\bibitem{Brouwer} A. Brouwer: \textit{Parameters of strongly regular graphs}, \url{https://www.win.tue.nl/~aeb/graphs/srg/srgtab.html}

\bibitem{LNO} L. de Lima, V. Nikiforov, C. Oliveira: \textit{The clique number and the smallest $Q$-eigenvalue of graphs}, Discrete Mathematics \textbf{339}(6) (2016), 1744--1752.



\end{thebibliography}
\end{document}